\newlength{\temp@wc@width}
\newlength{\temp@wc@height}
\newcommand{\widecheck}[1]{%
\setlength{\temp@wc@width}{\widthof{$#1$}}%
\setlength{\temp@wc@height}{\heightof{$#1$}}%
#1\hspace{-\temp@wc@width}%
\raisebox{\temp@wc@height+2pt}[\heightof{$\widehat{#1}$}]%
{\rotatebox[origin=c]{180}{\vbox to 0pt{\hbox{$\widehat{\hphantom{#1}}$}}}}%
}
 \numberwithin{equation}{section}
 \theoremstyle{plain}
 \newtheorem{theorem}{Theorem}[section]
 \newtheorem{corollary}[theorem]{Corollary}
 \newtheorem{proposition}[theorem]{Proposition}
\theoremstyle{definition}\newtheorem{definition}[theorem]{Definition}}
 \def\bsr{\operatorname{bsr}}
\def\tsr{\operatorname{tsr}}
 \def\T{ \mathbb T}
 \def\R{ \mathbb R}
 \def\H{H^\infty}
 \def\D{{ \mathbb D}}
  \def\K{{ \mathbb K}}
 \def\C{{ \mathbb C}}
 \def\N{{ \mathbb N}}
 \def\bs{\boldsymbol}
 \def\union{\cup}
 \def\inter{\cap}
 \def\ov{\overline}
 \def\ss{\subseteq}
 \def\buildrel#1_#2^#3{\mathrel{\mathop{\kern 0pt#1}\limits_{#2}^{#3}}}
\begin{document}

%%%%%%%%%%%%%%%%%%%%%%%%%%%%%%%%%%%%%%%%%%%

 \title [Polydisk algebras]{Embedding polydisk algebras into the disk algebra and an application to stable ranks}

 %\thanks{}

 \author{Raymond Mortini}
 \address{\small D\'{e}partement de Math\'{e}matiques\\
 Institut \'Elie Cartan de Lorraine,  UMR 7502\\
 Ile du Saulcy\\
 F-57045 Metz, France} 
 \email{Raymond.Mortini@univ-lorraine.fr}

 \subjclass{Primary 46J15, Secondary 32A38; 30H05; 54C40}

 \keywords{Subalgebras of the disk algebra; polydisk algebra; infinite polydisk; Bass stable rank;
 topological stable rank}
 
 \begin{abstract}
 It is shown how to embed the polydisk algebras (finite and infinite ones) into the disk algebra $A(\ov\D)$.
 As a consequence, one  obtains uniform closed subalgebras of $A(\ov\D)$ which have arbitrarily prescribed stable ranks.
  \end{abstract}

  \maketitle

 \centerline {\small\the\day.\the \month.\the\year} \medskip
\section*{Introduction}
  Let $\D=\{z\in \C: |z|<1\}$ be  the open unit disk, $\ov\D=\{z\in \C: |z|\leq 1\}$ its closure, and
  $A(\ov\D)$  the disk-algebra, that is the space of all functions continuous on $\ov\D$ and holomorphic on $\D$.
 In this note I am interested in the question whether there are subalgebras of the disk algebra $A(\ov\D)$
that do not have the Bass stable rank one (see below for the definitions). As is well known, Jones,
Marshall and Wolff showed that the stable rank of $A(\ov\D)$ is one. Whereas in \cite{mo92}
I unveiled for any $n\in \N\union\{\infty\}$ subalgebras of $\H$ on the disk which have stable rank $n$, the problem whether these algebras could be chosen to be subalgebras of $A(\ov\D)$, remained open.   The examples
given in \cite{mo92} always meet $\H(\D)\setminus A(\ov\D)$. It is a quite  recent result developed together with Rudolf Rupp (see Corollary \ref{roy2}) that 
any subalgebra $B$  of $A(\ov\D)$   containing the polynomials
and satisfying Royden's property $(\alpha_0)$ has Bass stable rank one (note that $B$ is not  assumed to be closed in $A(\ov\D)$).
 On the other hand,  it is  easy to construct  a subalgebra of $A(\ov\D)$ that has  stable rank two:
just take  the restriction $\C[z]\,|_{\ov\D}$ of the polynomials to $\ov\D$.
In an oral communication Amol Sasane unveiled  a first non-closed subalgebra of $A(\ov\D)$
with stable rank infinity: if $\varphi$ is a conformal map of the disk $\{|z|<2\}$ onto the
upper half plane $H^+$, then the algebra 
$$A=\{f\circ\varphi|_{\ov\D}: f\in {\rm AP}^+\}$$
of pull-backs of almost periodic functions that are analytic on $H^+$ is  isomorphic to
${\rm AP}^+$ and henceforth has stable rank infinity (see \cite{misa} and  \cite{mr}).

It is the aim of this paper to prove, given $n\in \N\union \{\infty\}$,  the existence of {\it uniformly closed} subalgebras of $A(\ov\D)$
that have  Bass stable rank $n$. The proof
is based on embedding the polydisk algebras $A(\ov\D^{\,n})$ and $A({\bf D}^{\infty})$ isomorphically into $A(\ov\D)$ (see below for the definitions).
This will be done by using the Rudin-Carleson interpolation theorem for 
disk algebra functions and the  topological fact (known under the name
of the Alexandroff-Hausdorff theorem), that every compact metric space is the continuous
image of the  Cantor set (see for example \cite{ros}).

\section{background}

\begin{definition}
 Let $A$ be  a commutative unital algebra (real or complex) 
 with identity element denoted by 1.

\begin{enumerate}
\item [(1)] An $n$-tuple $(f_1,\dots,f_n)\in A^n$ is said to be {\it invertible} (or {\it unimodular}) 
if there exists
 $(x_1,\dots,x_n)\in A^n$ such that the B\'ezout equation $\sum_{j=1}^n x_jf_j=1$
 is satisfied.
   The set of all invertible $n$-tuples is denoted by $U_n(A)$. Note that $U_1(A)=A^{-1}$.
   \index{$U_n(A)$}
   
 An $(n+1)$-tuple $(f_1,\dots,f_n,g)\in U_{n+1}(A)$ is  called {\sl reducible}  
 if there exists 
 $(a_1,\dots,a_n)\in A^n$ such that $(f_1+a_1g,\dots, f_n+a_ng)\in U_n(A)$.

 \item [(2)] The {\sl Bass stable rank} of $A$, denoted by $\bsr A$,  is the smallest integer $n$ such that every element in $U_{n+1}(A)$ is reducible. 
 If no such $n$ exists, then $\bsr A=\infty$. 
  \end{enumerate}
  \end{definition}
  
 It is obvious that if  $A$ and $B$ are two commutative unital algebras such  that $A$ is isomorphic to $B$, then $\bsr A=\bsr B$, because
any isomorphism $\iota$ between $A$ and $B$ induces a bijection between $U_n(A)$ and
$U_n(B)$.
The following two observations  stem from joint work with R. Rupp \cite{moru}. 
Here $\K=\R$ or $\K=\C$.

\begin{proposition}\label{bsr-roy}
Let $X$ be a topological space and $B$ a subalgebra of  $C_b(X,\K)$ with $\K\ss B$. 
Suppose that $B$ has Royden's property $(\alpha_0)$; that is
\begin{enumerate}
\item [$(\alpha_0)$] For every $f\in B$: if $||1-f||_\infty<1$, then  $f\in B^{-1}$.
\end{enumerate}
Then $\bsr B\leq \bsr \ov B^{||\cdot||_\infty}$, where $\ov B^{||\cdot||_\infty}$ is the 
uniform closure of $B$.
\end{proposition}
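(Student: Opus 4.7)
The plan is the standard "approximate then correct with $(\alpha_0)$" argument. We may assume $n:=\bsr \overline{B}^{\|\cdot\|_\infty}$ is finite, since otherwise there is nothing to prove. Let $\overline{B}$ denote the uniform closure of $B$ in $C_b(X,\K)$. Starting from an arbitrary $(f_1,\dots,f_n,g)\in U_{n+1}(B)$, the first step is to note that $U_{n+1}(B)\subseteq U_{n+1}(\overline{B})$, so by the hypothesis on $\bsr \overline{B}$ there exist $a_1,\dots,a_n\in \overline{B}$ and $b_1,\dots,b_n\in \overline{B}$ with
\[
\sum_{j=1}^{n} b_j\,(f_j+a_j g)\;=\;1.
\]

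Next, using density of $B$ in $\overline{B}$, choose $a_j',\,b_j'\in B$ approximating $a_j$ and $b_j$ uniformly. Write
\[
b_j'(f_j+a_j'g)-b_j(f_j+a_jg)\;=\;b_j'(a_j'-a_j)g+(b_j'-b_j)(f_j+a_jg),
\]
so that, since $f_j,g\in C_b(X,\K)$ and the $a_j,b_j$ are bounded, the sup-norm of this difference is controlled by $\|b_j'\|_\infty\|g\|_\infty\|a_j'-a_j\|_\infty+\|b_j'-b_j\|_\infty\,\|f_j+a_jg\|_\infty$. Summing over $j$ and choosing the approximations fine enough, we arrange
\[
\Bigl\|1-\sum_{j=1}^n b_j'\,(f_j+a_j'g)\Bigr\|_\infty\;<\;1.
\]

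Now Royden's property $(\alpha_0)$ applies: the function $F:=\sum_{j=1}^n b_j'(f_j+a_j'g)$ lies in $B$ (since $B$ is an algebra containing the $f_j,g,a_j',b_j'$), and $\|1-F\|_\infty<1$ forces $F\in B^{-1}$. Multiplying the $b_j'$ by $F^{-1}\in B$ yields $\tilde b_j:=F^{-1}b_j'\in B$ with $\sum_{j=1}^n \tilde b_j(f_j+a_j'g)=1$. Hence $(f_1+a_1'g,\dots,f_n+a_n'g)\in U_n(B)$, which says that the original tuple is reducible in $B$. As the tuple was arbitrary, $\bsr B\leq n$.

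The only delicate point is the bookkeeping in the approximation step: one must first fix $a_j$ and $b_j$ in $\overline{B}$, only then approximate, and verify that the cross-term $b_j'(a_j'-a_j)g$ is small using the $C_b$-boundedness of $g$ (this is where the assumption $B\subseteq C_b(X,\K)$, rather than merely $B$ being a normed algebra, is used). Once the $\|\cdot\|_\infty<1$ estimate is in place, $(\alpha_0)$ does the rest automatically, and no further reduction argument is needed.
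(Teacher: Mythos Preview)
Your proof is correct and uses the same underlying idea as the paper: approximate the coefficients coming from $\overline{B}$ by elements of $B$ and then invoke $(\alpha_0)$ to invert the resulting Bézout combination. The organizational difference is that the paper first isolates the lemma $U_n(B)=U_n(\overline{B})\cap B^n$ (proved by approximating only the Bézout coefficients and applying $(\alpha_0)$), and then appeals to a cited ``standard observation'' of Corach--Su\'arez that this equality for all $n$ forces $\bsr B\le\bsr\overline{B}$; that cited step is precisely where the approximation of the \emph{reduction} coefficients $a_j$ takes place, using that $U_n(\overline{B})$ is open in $\overline{B}^{\,n}$. You instead carry out both approximations simultaneously in one pass, which makes your argument self-contained at the cost of not singling out the reusable lemma $U_n(B)=U_n(\overline{B})\cap B^n$.
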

\begin{proof}
Let $A:= \ov B^{||\cdot||_\infty}$.  We show that $U_n(B)=U_n(A)\inter B^n$.
Since $U_n(B)\ss U_n(A)\inter B^n$, it only remains to show the reverse inclusion. So let
$(b_1,\dots,b_n)\in U_n(A)\inter B^n$. Then there is $(a_1,\dots,a_n)\in A^n$
such that $1=\sum_{j=1}^n a_jb_j$. Uniformly approximating $a_j$ by
elements $x_j\in B$ yields that $||\sum_{j=1}^nx_jb_j-1||_\infty<1/2$. By assumption
$(\alpha_0)$, $f:=\sum_{j=1}^nx_jb_j\in B^{-1}$. Hence $(b_1,\dots,b_n)\in U_n(B)$.
It is now a  standard observation that  $\bsr B\leq \bsr A$ (see \cite{cs3} or \cite{misa}).
\end{proof}
 \begin{corollary}\label{roy2}
 Let $B$ be a subalgebra of the disk algebra $A(\ov\D)$ such that
 \begin{enumerate}
\item [(1)] $B$ contains the polynomials (that is $\C[z]\,|_{\ov \D} \ss B$);
\item [(2)] For every $f\in B$: if $||1-f||_\infty<1$, then  $f\in B^{-1}$.
\end{enumerate}
Then $\bsr B=1$.
\end{corollary}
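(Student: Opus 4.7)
The plan is to apply Proposition \ref{bsr-roy} directly. Hypothesis (2) of the corollary is verbatim Royden's property $(\alpha_0)$, and since the polynomials include the constants one has $\C \ss B \ss C_b(\ov\D,\C)$. Thus Proposition \ref{bsr-roy} applies with $X = \ov\D$ and $\K = \C$ and yields
\[
\bsr B \,\leq\, \bsr \ov B^{||\cdot||_\infty}.
\]

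The next step is to identify this uniform closure. Hypothesis (1) produces the sandwich $\C[z]|_{\ov\D} \ss B \ss A(\ov\D)$. Since the polynomials are uniformly dense in the disk algebra (for instance via Fej\'er means of the boundary function combined with the maximum modulus principle, or directly by Mergelyan's theorem), this sandwich forces
\[
\ov B^{||\cdot||_\infty} = A(\ov\D).
\]

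It remains to invoke the theorem of Jones, Marshall and Wolff, already recalled in the introduction, that $\bsr A(\ov\D) = 1$. Combined with the preceding display this gives $\bsr B \leq 1$, whence $\bsr B = 1$.

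No real obstacle is encountered here: the corollary is essentially a short synthesis of Proposition \ref{bsr-roy}, the density of polynomials in $A(\ov\D)$, and the deep Jones--Marshall--Wolff theorem. The only point requiring a moment of attention is to observe that hypothesis (1) is exactly what is needed to pin down the uniform closure of $B$ as the full disk algebra, so that the upper bound coming from Proposition \ref{bsr-roy} becomes a useful $1$.
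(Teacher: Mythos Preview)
Your proof is correct and follows essentially the same route as the paper's own proof: apply Proposition~\ref{bsr-roy} using hypothesis~(2), identify the uniform closure of $B$ as $A(\ov\D)$ via hypothesis~(1), and finish with the Jones--Marshall--Wolff theorem. The only difference is that you spell out in slightly more detail why $\C\ss B$ and why the polynomials are dense in $A(\ov\D)$.
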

\begin{proof}
By (1), $B$ is uniformly dense in $A(\ov\D)$. Because (2) is Royden's property
$(\alpha_0)$, we may apply Proposition  \ref{bsr-roy} to conclude that $\bsr B\leq\bsr A(\ov\D)$.
Since by the Jones-Marshall-Wolff theorem $\bsr A(\ov\D)=1$ (\cite{jmw}), we are done. 
\end{proof}

\section{An embedding theorem}

Recall that  $\ov\D^n$ is the closed polydisk  and 
${\bf D}^\infty:=\prod_{n\in \N} \ov\D$ the   infinite polydisk. 
By Tychonov's theorem, $\bf D^\infty$ is a compact metric space when endowed with
 the product topology. Moreover, each $\ov\D^n$ and ${\bf D}^\infty$ are separable.
  The {\it polydisk algebra} $A(\ov\D^n)$ is the set of functions 
 continuous on $\ov\D^n$ and holomorphic on $\D^n$.  In the same spirit, one defines
 the infinite polydisk algebra $A({\bf D}^\infty)$ as  the smallest uniformly closed subalgebra of 
 $C(\bf D^\infty,\C)$ containing all the coordinate functions $z_1,z_2,\dots$.
 Let $\C[z_1,z_2,\dots]$ denote the set of polynomials 
 $$\sum_{\bs j\in \N^n} a_{\bs j} z_1^{j_1}\dots z_n^{j_n}, n\in \N,$$
 over $\C$, where $\bs j=(j_1,\dots, j_n)\in \N^n$. Hence
 $$\C[z_1,z_2,\dots]\,|_{{\bf D}^\infty}\ss A({\bf D}^\infty).$$

 \begin{theorem}\label{embed}
 There are uniformly closed subalgebras $A_n$ and $A_\infty$  of $A(\ov\D)$ 
that are algebraically isomorphic to $A(\ov\D^n)$, respectively $A({\bf D}^\infty)$.
\end{theorem}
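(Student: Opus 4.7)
The plan is to pull back functions on $\ov\D^{\,n}$ (respectively ${\bf D}^\infty$) via a holomorphic map $\Psi: \ov\D \to \ov\D^{\,n}$ whose boundary values, restricted to a well-chosen Cantor subset of $\T$, realize a continuous surjection onto the target polydisk. Such a surjection exists by the Alexandroff--Hausdorff theorem, and each of its coordinate functions can be extended to the closed disk by the Rudin--Carleson interpolation theorem. Pulling back along $\Psi$ then produces the desired isometric algebra embedding.

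In detail, I fix a Cantor set $K\ss\T$ of Lebesgue measure zero and use Alexandroff--Hausdorff to produce a continuous surjection $\varphi=(\varphi_1,\dots,\varphi_n): K\to\ov\D^{\,n}$. Each $\varphi_i\in C(K,\C)$ satisfies $||\varphi_i||_\infty\leq 1$, so by Rudin--Carleson there is $\Phi_i\in A(\ov\D)$ with $\Phi_i|_K=\varphi_i$ and $||\Phi_i||_\infty\leq 1$. Since $\varphi$ is onto $\ov\D^{\,n}$, each $\varphi_i$ is non-constant, hence so is $\Phi_i$, and by the maximum modulus principle $|\Phi_i(z)|<1$ for $z\in\D$. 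Therefore $\Psi:=(\Phi_1,\dots,\Phi_n): \ov\D\to\ov\D^{\,n}$ is continuous on $\ov\D$, agrees with $\varphi$ on $K$, and satisfies $\Psi(\D)\ss\D^n$.

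Next I define $\iota: A(\ov\D^{\,n})\to A(\ov\D)$ by $\iota(f):=f\circ\Psi$. The inclusion $\Psi(\D)\ss\D^n$ ensures $\iota(f)$ is holomorphic on $\D$, and continuity on $\ov\D$ is automatic, so $\iota(f)\in A(\ov\D)$. Clearly $\iota$ is an algebra homomorphism, and $||\iota(f)||_\infty\leq||f||_\infty$. For the reverse inequality,
$$||f||_\infty=||f\circ\varphi||_\infty=||\iota(f)|_K||_\infty\leq||\iota(f)||_\infty,$$
using the surjectivity of $\varphi$ together with $\Psi|_K=\varphi$. Hence $\iota$ is isometric, in particular injective, and $A_n:=\iota(A(\ov\D^{\,n}))$ is a uniformly closed subalgebra of $A(\ov\D)$ algebraically isomorphic to $A(\ov\D^{\,n})$.

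For the infinite case, I apply Alexandroff--Hausdorff to produce a continuous surjection $\varphi=(\varphi_i)_{i\in\N}: K\to{\bf D}^\infty$ (permissible because ${\bf D}^\infty$ is a compact metric space), extend each $\varphi_i$ to $\Phi_i\in A(\ov\D)$ with $||\Phi_i||_\infty\leq1$ and $|\Phi_i|<1$ on $\D$, and set $\Psi:=(\Phi_i)_{i\in\N}:\ov\D\to{\bf D}^\infty$, which is continuous in the product topology. For $f\in\C[z_1,z_2,\dots]$ the pullback $f\circ\Psi$ is a polynomial in finitely many of the $\Phi_i$, hence lies in $A(\ov\D)$; a general $f\in A({\bf D}^\infty)$ is a uniform limit of such polynomials, and the corresponding uniform limit in $A(\ov\D)$ preserves holomorphy on $\D$. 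The remaining verifications (algebra morphism, isometry, closedness of the image) are identical to the finite case. The main technical point throughout is ensuring that $\Psi$ sends $\D$ into the \emph{open} polydisk, for otherwise $f\circ\Psi$ need not be holomorphic on $\D$; this is precisely what the combination of Rudin--Carleson with the maximum principle secures.
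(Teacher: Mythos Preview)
Your proof is correct and follows essentially the same route as the paper's: Cantor set in $\T$, Alexandroff--Hausdorff surjection onto the target polydisk, Rudin--Carleson extension of the coordinate functions, and pullback to obtain an isometric embedding. You are somewhat more explicit than the paper in two places---the maximum-modulus argument guaranteeing $\Psi(\D)\ss\D^n$, and the handling of $f\circ\Psi$ for $f\in A({\bf D}^\infty)$ via uniform approximation by polynomials---but the underlying argument is identical.
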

\begin{proof}
 Let $C\ss\T$ be the homeomorphic image of the usual ternary Cantor set on $[0,1]$ via 
 the map $e^{i\pi x}$.
By the Alexandroff-Hausdorff theorem, \cite{ros}, there is a continuous surjective map  
$$M_n=(\phi_1,\dots, \phi_n):C\to \ov\D^n,$$ 
respectively
$$M_\infty=(\phi_1,\phi_2,\dots): C\to {\bf D}^\infty.$$
Since $C$ has one-dimensional Lebesgue-measure zero, the Rudin-Carleson interpolation  Theorem \cite[p. 58]{gam}
implies that there are functions $f_j\in A(\ov\D)$ such that $f_j|_C=\phi_j$ and $||f_j||=1$. 
%Let $K$ be either $\D^n$ or ${\bf D}^\infty$.
 Define $F_n: \ov\D\to \ov\D^n$ by
$$F_n(\xi)=(f_1(\xi),\dots, f_n(\xi)),$$
and $F_\infty: \ov\D\to {\bf D}^\infty$ by
$$F_\infty(\xi)=(f_1(\xi),f_2(\xi), \dots).$$
By construction, the range of $F_n$ on $\ov\D$ is $\ov\D^n$ and the range of $F_\infty$
on $\ov \D$ is ${\bf D}^\infty$. Moreover, since  $f_j(\D)\ss \D$, 
the functions $f\circ F_n$ and $f\circ F_\infty$ are holomorphic on $\D$ for any $f\in A(\ov\D^n)$
respectively $f\in A({\bf D}^\infty)$. Hence
$$\Psi_n: \begin{cases} A(\ov\D^n) & \to A(\ov\D)\\
 f &\mapsto f\circ F_n
 \end{cases}
 $$
 and
 $$\Psi_\infty: \begin{cases} A({\bf D}^\infty) & \to A(\ov\D)\\
 f &\mapsto f\circ F_\infty
 \end{cases}
 $$
are isometric isomorphisms of $A(\ov\D^n)$, respectively $A({\bf D}^\infty)$,  onto a uniformly closed subalgebra of $A(\ov\D)$. 
\end{proof}

 \begin{corollary}
 For every $n\in \N\union \{\infty\}$ there is a  uniformly closed subalgebra $A_n$ of $A(\ov\D)$ 
with $\bsr A_n=n$.
\end{corollary}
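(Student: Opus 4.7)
The plan is to derive the corollary by combining Theorem~\ref{embed} with the known values of the Bass stable rank for the polydisk algebras $A(\ov\D^n)$ and $A({\bf D}^\infty)$. By that theorem, for each $n\in\N$ there is a uniformly closed subalgebra $A_n$ of $A(\ov\D)$ algebraically isomorphic to $A(\ov\D^n)$, and a uniformly closed subalgebra $A_\infty$ of $A(\ov\D)$ isomorphic to $A({\bf D}^\infty)$.

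As observed immediately after the definition of $\bsr$, any algebra isomorphism $\iota\colon A\to B$ induces a bijection between $U_k(A)$ and $U_k(B)$, and this bijection visibly preserves reducibility; hence $\bsr$ is an isomorphism invariant. Therefore $\bsr A_n = \bsr A(\ov\D^n)$ for every $n\in\N$ and $\bsr A_\infty = \bsr A({\bf D}^\infty)$, and the statement reduces to the two identities
\[
\bsr A(\ov\D^n) = n \quad (n\in\N), \qquad \bsr A({\bf D}^\infty) = \infty.
\]

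For finite $n$, the case $n=1$ is the Jones--Marshall--Wolff theorem already invoked in Corollary~\ref{roy2}, and for $n\geq 2$ the equality $\bsr A(\ov\D^n)=n$ is a classical result on the polydisk algebra (Corach--Su\'arez), which I would simply quote. For the infinite polydisk, $\bsr A({\bf D}^\infty)=\infty$ follows from the finite case by a standard retraction argument: the natural inclusion $\iota_n\colon A(\ov\D^n)\hookrightarrow A({\bf D}^\infty)$ of functions not depending on $z_{n+1},z_{n+2},\dots$ is split by the retraction $r_n\colon g\mapsto g(\,\cdot\,,0,0,\dots)$, so any reduction in $A({\bf D}^\infty)$ of the image $\iota_n(g_1,\dots,g_{n+1})$ of a non-reducible $(n+1)$-tuple in $A(\ov\D^n)$ would, under $r_n$, descend to a reduction of $(g_1,\dots,g_{n+1})$ itself, contradicting its non-reducibility. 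Hence $\bsr A({\bf D}^\infty)\geq n$ for every $n\in\N$.

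The only genuine obstacle is the identity $\bsr A(\ov\D^n)=n$ for $n\geq 2$; once this standard fact about uniform algebras on the polydisk is cited, everything else is the bookkeeping above based on Theorem~\ref{embed} and the isomorphism invariance of~$\bsr$.
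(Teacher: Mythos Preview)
Your overall strategy---use Theorem~\ref{embed} and invariance of $\bsr$ under isomorphism, then quote the known stable ranks of the polydisk algebras---is exactly right, and your retraction argument for $\bsr A({\bf D}^\infty)=\infty$ is fine. The gap is in the value you quote for the finite polydisk: the Corach--Su\'arez theorem does \emph{not} say $\bsr A(\ov\D^{\,n})=n$. The correct formula is
\[
\bsr A(\ov\D^{\,n})=\left\lfloor \tfrac{n}{2}\right\rfloor+1,
\]
so for instance $\bsr A(\ov\D^{\,3})=2$, not $3$. (You may be thinking of the \emph{topological} stable rank, where indeed $\tsr A(\ov\D^{\,n})=n+1$.) With your choice $A_n\cong A(\ov\D^{\,n})$ you therefore get $\bsr A_n=\lfloor n/2\rfloor+1$, which does not yield every prescribed value $n$.

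The fix is immediate once the correct formula is in hand: given $n\in\N$, choose $N$ with $\lfloor N/2\rfloor+1=n$ (e.g.\ $N=2n-2$ or $N=2n-1$) and let $A_n$ be the uniformly closed subalgebra of $A(\ov\D)$ isomorphic to $A(\ov\D^{\,N})$ supplied by Theorem~\ref{embed}; then $\bsr A_n=\bsr A(\ov\D^{\,N})=n$. This is precisely how the paper proceeds.
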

 \begin{proof}
 Let $N\in \N$ be chosen so that $\left\lfloor \frac{N}{2}\right\rfloor +1=n$.  By Theorem \ref{embed},
 $A(\ov\D^N)$ is isomorphic to a uniformly closed subalgebra $A_N$ of $A(\ov\D)$. Hence
 $$\bsr A_N=\bsr A(\ov\D^N)=\left\lfloor \frac{N}{2}\right\rfloor +1=n,$$ where the penultimate
  equality is due to Corach and Su\'arez \cite{cs3}.
 Moreover, by \cite{mo92}, $\bsr A({\bf D}^\infty)=\infty$.  Since by Theorem 
 \ref{embed},
 $A({\bf D}^\infty)$ is isomorphic to a uniformly closed subalgebra $A_\infty$ of $A(\ov\D)$,
 we deduce that
 $$\bsr A_\infty=\bsr A({\bf D}^\infty)=\infty.$$
\end{proof}
\section{The topological stable rank}

Associated with the Bass stable rank is the notion of {\it topological stable rank}
introduced by Rieffel \cite{ri}.
\begin{definition}
Let $A$ be  a commutative unital complex Banach algebra. The {\it topological stable rank}, 
${\rm tsr} A$, of $A$ is the least integer
  $n$ for which $U_n(A)$ is dense in $A^n$, or infinite if no such $n$ exists.  
\end{definition}
It is straightforward  to see (and well known) that $\tsr A(\ov\D)=2$. Corach and Su\'arez \cite{cs}
showed that $\tsr A(\ov\D^n)=n+1$ for $n\in \N$.  Because $\bsr A\leq \tsr A$ is always true,
  $\tsr A({\bf D}^\infty)=\infty$. Since the topological  stable rank is invariant under isometric
  isomorphisms, we  obtain from Corollary \ref{embed} the following theorem.
  
    \begin{corollary}
 For every $n\in \N\union \{\infty\}$ there is a  uniformly closed subalgebra $A_n$ of $A(\ov\D)$ 
with $\tsr A_n=n+1$.
\end{corollary}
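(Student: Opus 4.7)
The plan is to mirror the argument of the preceding corollary, with the Bass stable rank replaced by the topological stable rank. For each $n\in\N$, I would invoke Theorem \ref{embed} to produce a uniformly closed subalgebra $A_n\ss A(\ov\D)$ that is isometrically isomorphic to $A(\ov\D^n)$ through the map $\Psi_n(f)=f\circ F_n$ constructed there. Combining the Corach--Su\'arez evaluation $\tsr A(\ov\D^n)=n+1$ with the invariance of $\tsr$ under isometric algebra isomorphisms immediately gives $\tsr A_n=n+1$. Note that, in contrast to the Bass case, no re-indexing is required: the dimension of the polydisk equals the desired rank minus one, so one simply takes $N=n$.

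For the case $n=\infty$, I would instead use the embedding $\Psi_\infty$ from Theorem \ref{embed} to obtain a uniformly closed subalgebra $A_\infty\ss A(\ov\D)$ isometrically isomorphic to $A({\bf D}^\infty)$. Since $\bsr B\leq\tsr B$ holds in general and $\bsr A({\bf D}^\infty)=\infty$ was already recorded in the previous corollary, we have $\tsr A({\bf D}^\infty)=\infty$, and invariance under isomorphism transfers this to $A_\infty$. With the natural convention $\infty+1=\infty$ this finishes the case.

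There is no genuine obstacle: Theorem \ref{embed} has done the real work by furnishing the isometric embeddings, the value $\tsr A(\ov\D^n)=n+1$ is quoted from \cite{cs}, and the preservation of topological stable rank under isometric algebra isomorphisms is essentially by definition. The only point worth a moment's verification is this last one: an isometric algebra isomorphism $\iota:A\to B$ induces a bijection $U_n(A)\to U_n(B)$ together with a homeomorphism $A^n\to B^n$, so the density of invertible $n$-tuples in the ambient $n$-th power is preserved. Granted this, everything assembles formally from earlier material.
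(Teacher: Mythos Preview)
Your proposal is correct and follows exactly the paper's approach: take $A_n=\Psi_n(A(\ov\D^n))$ (respectively $A_\infty=\Psi_\infty(A({\bf D}^\infty))$), quote $\tsr A(\ov\D^n)=n+1$ from \cite{cs} and $\tsr A({\bf D}^\infty)=\infty$ from $\bsr\leq\tsr$, and transfer via invariance under isometric isomorphisms. Your write-up is in fact more explicit than the paper's one-line proof, but the argument is identical.
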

\begin{proof}
Just take $A_n:=\Psi_n(A(\ov\D^n))$, respectively 
$A_\infty:=\Psi_\infty(A({\bf D}^\infty))$. 
\end{proof}

\subsection*{Acknowledgements}

I thank Amol Sasane and Rudolf Rupp for  E-mail exchanges in connection with
the Bass stable rank. I also thank the referee for his numerous linguistic comments
and for suggesting  to add a section on the topological stable rank of subalgebras of $A(\ov\D)$.


\begin{thebibliography}{HD}


\normalsize
\baselineskip=17pt

%%%%%%%%%%%%%%%




 \bibitem{cs} G. Corach, F. D. Su\'arez,
 \emph{Extension problems and stable rank in  commutative Banach algebras},
 Topology and its Applications 21 (1985), 1--8.
 
 \bibitem{cs3} G. Corach, F. D. Su\'arez,
\emph{Dense morphisms in commutative Banach algebras},
Trans. Amer. Math. Soc. 304 (1987), 537--547. 

 \bibitem{gam} {T.W. Gamelin} \emph{Uniform algebras}, 
Chelsea, New York, 1984.

 
\bibitem{jmw} P.W. Jones, D. Marshall, T.H. Wolff, 
\emph{Stable rank of the disc algebra},
Proc. Amer. Math. Soc. 96 (1986), 603--604.

\bibitem{misa} K. Mikkola, A. Sasane,
\emph{Bass and topological stable ranks of complex and real algebras of measures, functions and sequences},
Complex Anal. Oper. Theory 4 (2010), 401--448.



\bibitem{mo92} R. Mortini, 
\emph{An example of  a subalgebra of $\H$ on the unit disk whose stable rank is not finite},
Studia Math.  103 (1992), 275--281.

\bibitem{mr}   R. Mortini, R. Rupp,
\emph{The Bass and topological stable ranks for algebras  of almost periodic functions  on the real line},
to appear in Trans. Amer. Math. Soc.

\bibitem{moru} R. Mortini, R. Rupp,
\emph{An introduction to extension problems, B\'ezout equations and stable ranks in classical function algebras. A monograph  accompanied by  introductory chapters on point-set topology and function theory}, 
in preparation, ca. 1000 pages.
 
 
  \bibitem{ri} M. Rieffel,
   \emph{Dimension  and stable rank in the $K$-theory of $C^*$-algebras},
Proc. London Math. Soc. 46 (1983), 301--333.



\bibitem{ros} I. Rosenholtz,
 \emph{Another proof that any compact metric space is the continuous image of the Cantor set},
 Amer. Math. Monthly 83 (1976), 646--647. 
 

\end{thebibliography}
\end{document}